\numberwithin{equation}{section}
\newtheorem{lemma}{Lemma}
\begin{document}

\author{Ajai Choudhry}
\title{Three  integers whose sum, product and \\
the sum of the products of the integers,\\
 taken two at a time, are perfect squares}

\date{}
\maketitle
\abstract{Euler had considered the problem of finding three integers whose sum,  product, and also the sum of the products of the integers, taken two at a time, are all perfect squares. Euler's methods of solving the problem lead to parametric solutions in terms of polynomials of high degrees and his   numerical solutions consisted of very large integers. We obtain, by a new method,  several parametric  solutions  given by polynomials of much smaller degrees and thus we get  a number of numerically small solutions of the problem.} 
\medskip

\noindent Keywords: three integers whose elementary symmetric functions are all squares.
\medskip

\noindent Mathematics Subject Classification 2020: 11D25
\bigskip
 
\section{Introduction}\label{intro}
This paper is concerned with the problem of finding three integers $a, b$ and $c$ whose sum,  product, and also the sum of the products of the integers, taken two at a time, are all perfect squares. 

The  problem was first considered by Euler in 1760. While Euler's original paper is in Latin \cite{LE1}, an English translation of the paper has recently been published in 2022 \cite{SW}. Euler has mentioned that he spent ``a long time in vain" searching for such numbers and then, ``almost unexpectedly" found a solution by ``a remarkable method". He noted that any solution of the problem in rational numbers readily yields, on multiplying the rational numbers by a suitable perfect square,  a solution of the problem in integers.  His method of solving leads to very large numbers. In fact, his method yields parametric solutions in terms of a single rational parameter.  While Euler did not carry out the cumbersome computations to find the parametric solutions, we found, using the software MAPLE, that in  the simplest parametric solution, obtained by Euler's method, the values of $a, b, c $ are  given by polynomials of degree 132. The smallest numerical solution found by Euler, consisting of integers with 13 digits, is as follows:
\[
1633780814400,  \quad  252782198228,\quad  3474741058973.
\]

Euler revisited the problem in 1779 \cite{LE2}. While he again referred to the problem as being ``very difficult" and giving rise to ``immense numbers", he obtained relatively       simpler solutions in smaller integers by imposing the additional condition  that all the three integers  $a, b $ and $c$ must be  perfect squares. Euler's new method of solving also leads to parametric solutions. Since Euler did not find the parametric solutions explicitly, we did the computations on MAPLE and found several parametric  solutions in terms of univariate polynomials. The simplest such solution may be written as follows:
\begin{equation*}
\begin{aligned}
a & = (t - 1)^2(t + 1)^2(t^2 + 5)^2(t^4 - 10t^2 + 5)^2, \\
b & = 256t^4(t - 1)^2(t + 1)^2(t^2 - 3)^2, \\
 c& = 4t^2(t^2 - 3)^2(t^4 - 10t^2 + 5)^2,
\end{aligned}
\end{equation*}
where $t$ is an arbitrary rational parameter. The smallest numerical solution found by Euler is the triad of integers: 81, 784 and 186624.

In 1899,  Fauquembergue \cite{EF} obtained the simpler solution $(a, b, c)= 4s^4t^2(s^2 + t^2), 4t^4(s^2 + t^2)s^2, (s^4 - t^4)^2$      which, on taking $(s, t)=(2, 1)$, yields  the numerically small solution $(a, b, c)= (320, 80, 225)$.   Solutions that are numerically smaller, as compared to Euler's solutions, have also been found recently using ``brute force methods" \cite{SW}.

In this paper we show that each of the three integers $a, b, c$ must be a sum of two squares of rational numbers of which one may possibly be   zero. Thus, in any solution, either exactly one of the integers is a square, or all three of them are squares, or none of them is a square.  We find parametric solutions of all three types, and we show how infinitely many parametric solutions can be obtained. 

Specifically, we obtain four   parametric solutions  in which one of the three integers $a, b, c$ is a square --- two of these solutions are given by simple polynomials of degree 8 while the remaining two are given by polynomials of degree 20. These four solutions,  on suitable specialization, yield four parametric solutions of degrees 12, 16, 36 and 40, respectively, in which all the three integers $a, b, c$ are  squares. We also obtain a parametric solution given by polynomials of degree 32 in which none of the three integers is a square. 

While one of our  solutions is the same as that found by Fauquembergue, all other solutions are new and are given by polynomials of much lower degrees as compared to the solutions generated by Euler's methods. Further, 
the  parametric solutions obtained in the paper yield  several numerically small solutions of the problem, the following four triads being examples:
\[
\{45, 64, 180\}, \quad \{81, 160, 1440\}, \quad \{35^2, 72^2, 96^2\}, \quad \{72, 136, 153\}. 
\]

\section{Three integers whose sum, product and the sum of the products, taken two at a time,
 are perfect squares}\label{threenumbers}  
We first prove a preliminary lemma concerning rational numbers that are expressible as a ratio of two rational numbers that are sums of two squares. We then apply the lemma  to show that the three integers that we are seeking must all be sums of two squares of rational numbers.

\subsection{A preliminary lemma}\label{prel}
\begin{lemma}\label{prelemma}: If two nonzero rational numbers $\alpha$ and $\beta$ are both expressible as sums of two squares of rational numbers, then their ratio $\alpha/\beta$ is also expressible as a sum of two squares of rational numbers.
\end{lemma}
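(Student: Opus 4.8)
The plan is to exploit the classical Brahmagupta--Fibonacci two-square identity, which asserts that a product of two sums of two squares is again a sum of two squares: for any rationals $p, q, r, s$,
\begin{equation*}
(p^2 + q^2)(r^2 + s^2) = (pr - qs)^2 + (ps + qr)^2.
\end{equation*}
Thus the set of rational numbers expressible as a sum of two rational squares is closed under multiplication. The key observation that turns this into a statement about ratios is that division reduces to multiplication: since $\beta \neq 0$, we may write $\alpha/\beta = \alpha\beta/\beta^2$, so it suffices to show that $\alpha\beta$ is a sum of two squares and then divide through by the perfect square $\beta^2$.

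Carrying this out, I would write $\alpha = p^2 + q^2$ and $\beta = r^2 + s^2$ with $p, q, r, s$ rational; since $\beta \neq 0$ we have $r^2 + s^2 \neq 0$. Applying the identity above gives $\alpha\beta = (pr - qs)^2 + (ps + qr)^2$, and dividing by $\beta^2 = (r^2 + s^2)^2$ yields
\begin{equation*}
\frac{\alpha}{\beta} = \left(\frac{pr - qs}{r^2 + s^2}\right)^2 + \left(\frac{ps + qr}{r^2 + s^2}\right)^2,
\end{equation*}
which is manifestly a sum of two squares of rational numbers, completing the argument.

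There is essentially no obstacle here; the only point requiring a word of care is that the hypothesis $\beta \neq 0$ (equivalently $r^2 + s^2 \neq 0$) is exactly what legitimises the passage from $\alpha/\beta$ to $\alpha\beta/\beta^2$, and that the convention permitting one of the two squares to vanish causes no difficulty — if, say, $\alpha$ is itself a perfect square we simply take $q = 0$. One could alternatively phrase the lemma group-theoretically, noting that the nonzero sums of two rational squares form a subgroup of $\mathbb{Q}^{\times}$, with the identity above giving closure under multiplication and the computation above giving closure under inversion; but I would prefer to record the explicit formula, since the explicit two-square identities are likely to reappear in the constructive parametric work that follows.
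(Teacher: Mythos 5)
Your proposal is correct and is essentially the paper's own argument: the paper directly defines $m=(\alpha_1\beta_1+\alpha_2\beta_2)/(\beta_1^2+\beta_2^2)$ and $n=(\alpha_1\beta_2-\alpha_2\beta_1)/(\beta_1^2+\beta_2^2)$, which are exactly the quantities you obtain by applying the Brahmagupta--Fibonacci identity to $\alpha\beta$ and dividing by $\beta^2$ (up to the harmless sign variant of the identity). No substantive difference.
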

\begin{proof} Let $\alpha= \alpha_1^2+ \alpha_2^2$ and $\beta= \beta_1^2+ \beta_2^2$ where $\alpha_i, \beta_i, i=1, 2$ are all rational numbers.  Since $\alpha$ and $\beta$ are both nonzero, we may  define  two rational numbers $m$ and $n$ as follows:
\[
m = (\alpha_1\beta_1 + \alpha_2\beta_2)/(\beta_1^2 + \beta_2^2), \quad n= (\alpha_1\beta_2 - \alpha_2\beta_1)/(\beta_1^2 + \beta_2^2).
\]
It is readily verified that $m^2+n^2=(\alpha_1^2 + \alpha_2^2)/(\beta_1^2 + \beta_2^2) = \alpha/\beta$. This proves the lemma. 
\end{proof}

We note that when $\alpha_1/\alpha_2=\beta_1/\beta_2$, then $n=0$ and $\alpha/\beta$ simply becomes $m^2$. 

\subsection{Parametric solutions in which one of the three integers is a square}\label{onesquare}

We now consider the problem of finding three rational numbers $a, b$ and $c$ that satisfy the conditions of our problem, that is, there must exist rational numbers $f, g $ and $h$ such that
\begin{equation}
\begin{aligned}
a+b +c & = f^2, \\
ab+bc+ca & =g^2, \\
abc & =h^2.
\end{aligned}
\label{Eulerprob}
\end{equation}

Thus the three elementary symmetric functions of $a, b, c$ are all squares, and   $a, b$ and $c$ are the roots of the following cubic equation:
\begin{equation}
x^3-f^2x^2+g^2x-h^2=0. \label{cubeqn}
\end{equation}
As already noted by Euler, any solution of our problem in rational numbers yields, on appropriate scaling, a solution in integers. 

Eq.        \eqref{cubeqn} may be written as
\[
x=(f^2x^2+h^2)/(x^2+g^2).
\]
Thus any rational root of Eq.        \eqref{cubeqn} is expressible as a ratio of two rational numbers both of which are sums of squares of two rational numbers. It now follows from Lemma \ref{prelemma}  that every rational root of Eq.        \eqref{cubeqn} is expressible as a sum of two squares of  rational numbers. Thus, we may take $x=p^2+q^2$ as one of the roots of the cubic Eq.        \eqref{cubeqn} where $p$ and $q$ are rational numbers.

 On substituting $x=p^2+q^2$ in Eq.        \eqref{cubeqn} and using the identity 
\[
(\alpha_1^2+ \alpha_2^2)(\beta_1^2 + \beta_2^2)=(\alpha_1\beta_1 + \alpha_2\beta_2)^2+(\alpha_1\beta_2 - \alpha_2\beta_1)^2,
\]
we may write $x^3+g^2x=x(x^2+g^2)$ as a sum of two squares, and accordingly, Eq.        \eqref{cubeqn} may be written as
\begin{equation}
(p^3 + pq^2 + qg)^2 + (p^2q + q^3 - pg)^2 -f^2(p^2+q^2)^2 -h^2=0. \label{cubeqn1}
\end{equation}
After suitable transposition of terms and factorization, Eq.        \eqref{cubeqn1} may be written as follows:
\begin{multline}
(p^3 - p^2f + pq^2 - q^2f + qg)(p^3 + p^2f + pq^2 + q^2f + qg)\\
 =(h-p^2q - q^3 + pg)(h+p^2q + q^3 - pg).\label{cubeqn2}
\end{multline}

Assuming that both sides of Eq.        \eqref{cubeqn2} are $\neq 0$, there exists a nonzero rational number $m$ such that
\begin{equation}
p^3 - p^2f + pq^2 - q^2f + qg=m(h-p^2q - q^3 + pg), \label{cubeqn2a}
\end{equation}
and now Eq.        \eqref{cubeqn2} reduces to
\begin{equation}
m(p^3 + p^2f + pq^2 + q^2f + qg)=h+p^2q + q^3 - pg. \label{cubeqn2b}
\end{equation}

Eqs. \eqref{cubeqn2a} and \eqref{cubeqn2b} may be considered as two linear equations in the variables $f$ and $g$, and on solving them, we get
\begin{equation}
\begin{aligned}
f & = -((p^4 + 2p^2q^2 + q^4 - qh)m^2 - 2mph + p^4 + 2p^2q^2 + q^4 + qh)\\
& \quad \quad \times ((m^2p - 2mq - p)(p^2 + q^2))^{-1}, \\
g &  = ((p^2q + q^3 - h)m^2 + (2p^3 + 2pq^2)m - p^2q - q^3 - h)/(m^2p - 2mq - p).
\end{aligned}
\label{valfg}
\end{equation}

With the values of $f$ and $g$ given by \eqref{valfg}, Eq.        \eqref{cubeqn} has a root $x=p^2+q^2$, and indeed, Eq.        \eqref{cubeqn} now reduces to 
\begin{multline}
(x-p^2-q^2)\{(m^2p - 2mq - p)^2(p^2 + q^2)^2x^2  -((m^2q + 2pm - q)^2h^2\\
 - 2(p^2 + q^2)^2(m^2 + 1)(m^2q + 2pm - q)h + (p^2 + q^2)^3(m^2q + 2pm - q)^2)x\\
 + h^2(p^2 + q^2)(m^2p - 2mq - p)^2\} =0. \label{cubeqnred1}
\end{multline}

We remove the first factor $x-p^2-q^2$, and write
\begin{equation}
\begin{aligned}
p & = (m^2t + 2ms - t)/(m^2 + 1), \\
q & = (m^2s - 2mt - s)/(m^2 + 1), \\
 h & = u(s^2 + t^2),
\end{aligned}
\label{valpqh}
\end{equation}
where $s, t$ and $u$ are rational parameters, and now  Eq.        \eqref{cubeqnred1} reduces, after removing the factors $(s^2 + t^2)^2(m^2 + 1)^2$, to the following quadratic equation in $x$:
\begin{equation}
t^2x^2 - s(s^3 - 2s^2u + st^2 + su^2 - 2t^2u)x + u^2t^2(s^2 + t^2)=0. \label{quadxu}
\end{equation}

The discriminant of Eq.~\eqref{quadxu} with respect to $x$ is $s^4\phi(s,t,u)$, where
\begin{equation}
\begin{aligned}
\phi(s,t,u)&  =u^4 - 4(s^2 + t^2)u^3/s + 2(s^2 + t^2)(3s^4 + 2s^2t^2 - 2t^4)u^2/s^4 \\
& \quad \quad - 4(s^2 + t^2)^2u/s + (s^2 + t^2)^2.
\end{aligned}
\label{disx}
\end{equation}

 Eq.        \eqref{quadxu} will have two rational roots if its discriminant, and hence also $\phi(s,t,u)$, is a perfect square, that is, there must exist a rational number $v$ such that the following equation has a solution in rational numbers:
\begin{equation}
v^2 =u^4+a_1u^3+a_2u^2+a_3u+a_4,
\label{quarticec}
\end{equation}
where 
\begin{equation}
\begin{aligned}
a_1  =  &  - 4(s^2 + t^2)/s, \quad & a_2  =  &   2(s^2 + t^2)(3s^4 + 2s^2t^2 - 2t^4)/s^4, \\
 a_3  =  &  - 4(s^2 + t^2)^2/s, \quad & a_4  =  &    (s^2 + t^2)^2.
\end{aligned}
\label{valaphi}
\end{equation}

Each solution of Eq.        \eqref{quarticec} yields a value of $u$ which makes the discriminant of Eq.        \eqref{quadxu} a perfect square, and hence Eq.        \eqref{quarticec} has two rational roots. Thus, Eq.        \eqref{cubeqn} has three rational roots and we  obtain the desired numbers $a, b $ and $c$.
 
The right-hand side of Eq.        \eqref{quarticec} is  a quartic function of $u$ in which   the coefficients of both $u^4$ and $u^0$ are perfect squares. Fermat had described a method (as quoted by Dickson \cite[p.\ 639]{Di}) of making such a quartic function a perfect square, and using Fermat's method, we readily find the following two solutions of Eq.        \eqref{quarticec}:
\begin{align}
(u, v) & = (2s^3/(s^2 - t^2), -(s^6 - s^4t^2 - 5s^2t^4 + t^6)/(s^2 - t^2)^2), \label{Fermatsol1}\\
(u, v) & = ((s^4 - t^4)/(2s^3), (s^2 + t^2)(s^6 - s^4t^2 - 5s^2t^4 + t^6)/(4s^6)). \label{Fermatsol2}
\end{align}

The value of $u$ given by each of the two solutions \eqref{Fermatsol1} and \eqref{Fermatsol2} makes the discriminant of Eq.        \eqref{quadxu} a perfect square, and hence this equation has two rational roots, and hence also Eq.        \eqref{cubeqn} has three rational roots. Using the relations \eqref{valpqh}, these three roots may be expressed as  rational functions  of $s$ and $t$, and on appropriate scaling, we get the values of $a, b, c$ in terms of polynomials in $s$ and $t$. 

Specifically, we note that, on taking $u= 2s^3/(s^2 - t^2)$, as in the solution \eqref{Fermatsol1}, Eq.        \eqref{quadxu} has the  two rational roots,
\begin{equation}
x=s^2(s^2 + t^2)/t^2 \quad {\rm and} \quad x = 4s^4t^2/(s^2 - t^2)^2, \label{rootpairquadxu}
\end{equation}
and we thus obtain, on appropriate scaling,  the following triad $a, b, c$ of integers, in terms of arbitrary integer parameters $s$ and $t$, such that the elementary symmetric functions of $a, b$ and $c$ are all perfect squares:
\begin{equation}
(a, b, c)  = (t^2(s^2 - t^2)^2(s^2+t^2), s^2(s^2 - t^2)^2(s^2+t^2), 4s^4t^4). \label{parmsol1}
\end{equation}

Similarly,  the value of $u$ given by the solution \eqref{Fermatsol2} of Eq.        \eqref{quarticec} yields two rational solutions of Eq.        \eqref{quadxu} and, as before, we obtain, on appropriate scaling, the following triad $a, b, c$:
\begin{equation}
(a, b, c)  = (4s^4t^2(s^2 + t^2), 4s^2t^4(s^2 + t^2), (s^4 - t^4)^2). \label{parmsol2}
\end{equation}
The solution \eqref{parmsol2} is the same as Fauquembergue's solution mentioned in the Introduction.

According to a theorem of Choudhry \cite[Theorem 4.1, pp. 789-790]{Ch}, if   $(u_1,\,v_1)$ and $(u_2,\,v_2)$ are two rational solutions of Eq.        \eqref{quarticec} with $u_1 \neq u_2$, a new rational solution of  Eq.        \eqref{quarticec} is given by $(u_{12},\,v_{12})$ where
\begin{multline} 
u_{12}=\{-2v_1v_2+2(u_1-u_2)(u_2v_1-u_1v_2)+a_1(u_1+u_2)u_1u_2\\
 +2a_2u_1u_2+a_3(u_1+u_2)+2a_4+2(u_1^2-u_1u_2+u_2^2)u_1u_2\}\\
 \times \{(u_1-u_2)(2v_1-2v_2+a_1(u_1-u_2)+2u_1^2-2u_2^2)\}^{-1}, \label{valu12}
\end{multline}
provided the denominator on the right-hand side of \eqref{valu12} $ \neq 0$. We omit the value of $v_{12}$ given in  \cite[Theorem 4.1, pp. 789-790]{Ch} as it is cumbersome to write and is not needed for our computations. 

Applying Choudhry's theorem to Eq.        \eqref{quarticec}, taking $(u_1, v_1)$ as the obvious solution $(0, s^2+t^2)$ and $(u_2, v_2)$ as the solution \eqref{Fermatsol1}, we find a solution of Eq.        \eqref{quarticec} in which 
\begin{equation}
u =   (s^4-t^4) (3 s^6 + s^4 t^2 + s^2 t^4 - t^6)/(2 s^3 (s^6 - s^4 t^2 - 5 s^2 t^4 + t^6)), \label{RMJsol1}
\end{equation}
and this yields the following solution of our problem:
\begin{equation}
\begin{aligned}
a & = 4s^4t^2(s^2 + t^2)(s^6 - s^4t^2 - 5s^2t^4 + t^6)^2,\\
b & = (s^4-  t^4)^2(s^6 - s^4t^2 - 5s^2t^4 + t^6)^2,\\
c & = 4s^2t^4(s^2 + t^2)(3s^6 + s^4t^2 + s^2t^4 - t^6)^2,
\end{aligned}
\label{parmsol3}
\end{equation}
where $s$ and $t$ are arbitrary parameters.

Similarly, applying Choudhry's theorem to Eq.        \eqref{quarticec}, again taking $(u_1, v_1)$ as the solution $(0, s^2+t^2)$ and $(u_2, v_2)$ as the solution \eqref{Fermatsol2}, we find a solution of Eq.        \eqref{quarticec} in which 
\begin{equation}
u = 2s^3(s^6 - s^4t^2 - 5s^2t^4 + t^6)/((3s^6 + s^4t^2 + s^2t^4 - t^6)(s^2 - t^2)) , \label{RMJsol2}
\end{equation}
and this yields the following solution of our problem:
\begin{equation}
\begin{aligned}
a & =t^2(s^2 - t^2)^2(s^2 + t^2)(3s^6 + s^4t^2 + s^2t^4 - t^6)^2 ,\\
b & = 4s^4t^4(3s^6 + s^4t^2 + s^2t^4 - t^6)^2,\\
c & =s^2(s^2 - t^2)^2(s^2 + t^2)(s^6 - s^4t^2 - 5s^2t^4 + t^6)^2,
\end{aligned}
\label{parmsol4}
\end{equation}
where, as before,  $s$ and  $t$ are arbitrary parameters.

We will now show that Eq.        \eqref{quarticec} has infinitely many rational solutions. We first write,
\begin{equation}
t=ms, \quad u=sU, \quad v=s^2V,
\label{valtuv}
\end{equation}
when Eq.        \eqref{quarticec} reduces to
\begin{multline}
V^2 = U^4 -4(m^2+1)U^3 - 2(m^2 + 1)(2m^4 - 2m^2 - 3)U^2\\
 - 4(m^2 + 1)^2U + (m^2 + 1)^2. \label{quarticecUV}
\end{multline}

While  Eq.       ~\eqref{quarticecUV} is in three variables $m, U $ and $V$, and hence  represents an algebraic surface over the field $\mathbb{Q}$, we may regard it as representing  a quartic model of an elliptic curve over the function field $\mathbb{Q}(m)$.  The birational transformation defined by
\begin{equation}
\begin{aligned}
U&=(6(m^2+1)X+mY+72m^6 + 72m^4 - 72m^2  - 72)\\
& \quad \quad \times (6(X-24m^4 - 36m^2 - 12))^{-1},\\
V&=(2m^2X^3 - 36m^2(2m^2 + 1)(m^2 + 1)X^2 - m^2Y^2 \\
 & \quad \quad- 432m^3(m^2 + 1)^2Y+ 1728m^2(m^2+1)^3(8m^6 - 15m^4 - 21m^2 + 1))\\
 & \quad \quad \times (6(X-24m^4 - 36m^2 - 12))^{-2}, \\
\end{aligned}
\label{biratUV}
\end{equation}
and 
\begin{equation}
\begin{aligned}
X&=6(3U^2 - (6m^2 + 6)U + 3V - (m^2 + 1)(2m^4 - 2m^2 - 3))/m^2,\\
 Y& = 108(U^3 -(3m^2 + 3)U^2 + UV - (m^2 + 1)(2m^4 - 2m^2 - 3)U\\
 & \quad \quad - (m^2 + 1)V - (m^2 + 1)^2)/m^3,
\end{aligned}
\label{biratXY}
\end{equation}
reduces  Eq.        \eqref{quarticecUV} to the following cubic equation:
\begin{equation}
\begin{aligned}
Y^2 & = X^3 - 432(m^4 - 2m^2 - 2)(m^2 + 1)^2X \\
& \quad \quad - 1728(m^2 - 1)(m^2 + 1)^3(2m^4 - 4m^2 - 7). 
\end{aligned}
\label{ecweier}
\end{equation}

Eq.~\eqref{ecweier} also represents an algebraic surface over $\mathbb{Q}$, but as in the case of Eq.       ~\eqref{quarticecUV}, we may regard it as  the Weierstrass  model of an elliptic curve over the function field $\mathbb{Q}(m)$. Using the solution \eqref{Fermatsol1} of Eq.        \eqref{quarticec} and the relations \eqref{valtuv} and \eqref{biratXY}, we readily find that a rational point  $P$ on the elliptic curve \eqref{ecweier} is given by
\[
X=-12(m^6 - 4m^2 - 3)/m^2, \quad Y = 216(m^2 + 1)^2/m^3.
\]

When $m=4$, the curve \eqref{ecweier} reduces to
\begin{equation}
Y^2 =  X^3 - 27716256X - 56159127360, \label{ecspl}
\end{equation}
and, corresponding to the point $P$ on the curve \eqref{ecweier},  a rational point on the  curve \eqref{ecspl}   is $( -12087/4,  7803/8)$. Since this rational point on the elliptic curve \eqref{ecspl} does not have integer coordinates,  it follows from the Nagell-Lutz theorem \cite[p. 56]{Si} on elliptic curves that the point $(-12087/4,  7803/8)$ is not a point of finite order.  We can thus find infinitely many rational points on the curve \eqref{ecspl} using the group law.

Since in the special case $m=4$, the point on the curve \eqref{ecspl} corresponding to the point $P$ is not of finite order, it follows that the point $P$ on the curve \eqref{ecweier} cannot  be a point of finite order. We can thus generate infinitely many rational points on the curve \eqref{ecweier} using the group law. Each of these rational points will yield a corresponding rational point on the curve \eqref{quarticecUV} and using the relations \eqref{valtuv},  we can find infinitely many parametric solutions of the diophantine Eq.        \eqref{quarticec}. We can thus obtain infinitely many parametric solutions of our problem.

Finally, we note that all the above solutions have been obtained by assuming that both sides of Eq.        \eqref{cubeqn2} are $\neq 0$. If we take both sides of Eq.        \eqref{cubeqn2} and then solve Eq.        \eqref{cubeqn}, we do not get any new solutions of our problem.

\subsection{Parametric solutions in which all the three integers are squares}\label{threesquares}
Solutions in which all  the three integers $a, b, c$ are squares may be obtained from each of the four solutions \eqref{parmsol1}, \eqref{parmsol2}, \eqref{parmsol3} and \eqref{parmsol4} by choosing the parameters $s$ and $t$ such that $s^2+t^2$ is a square. Thus, on taking $s=2mn$ and $ t=m^2+n^2$ in the two solutions \eqref{parmsol1}, \eqref{parmsol2}, we get the following two  triads of squares such that their three elementary symmetric functions are squares:

\begin{equation}
\begin{aligned}
a  & =(m^4-n^4)^2(m^4 - 6m^2n^2 + n^4)^2, \\
b & = 4m^2n^2(m^2+n^2)^2(m^4 - 6m^2n^2 + n^4)^2,\\
c & =  64m^4n^4(m^2-n^2)^4,
\label{allsquaresol1}
\end{aligned}
\end{equation}
and
\begin{equation}
\begin{aligned}
a & = 64m^4n^4(m^2 - n^2)^2, \\
b & = 16m^2n^2(m^2 - n^2)^4, \\
c & = (m^2 + n^2)^2(m^4 - 6m^2n^2 + n^4)^2
\label{allsquaresol2}
\end{aligned}
\end{equation}
where $m$ and $n$ are arbitrary parameters.

Similarly, the two solutions \eqref{parmsol3} and \eqref{parmsol4} yield the two triads,
\begin{equation}
\begin{aligned}
a & = 64m^4n^4(m^2 - n^2)^2(m^{{12}} - 26m^{{10}}n^2 + 79m^8n^4 \\
& \quad \quad - 44m^6n^6 + 79m^4n^8 - 26m^2n^{{10}} + n^{{12}})^2, \\
b & = (m^2 + n^2)^2(m^4 - 6m^2n^2 + n^4)^2(m^{12} - 26m^{10}n^2 + 79m^8n^4\\
& \quad \quad - 44m^6n^6 + 79m^4n^8 - 26m^2n^{10} + n^{12})^2,\\
c & =  16m^2n^2(m^2 - n^2)^4(m^{12} - {10}m^{10}n^2 + 15m^8n^4\\
& \quad \quad - 204m^6n^6 + 15m^4n^8 - {10}m^2n^{10} + n^{12})^2
\label{allsquaresol3}
\end{aligned}
\end{equation}
and
\begin{equation}
\begin{aligned}
a & = (m^4 - n^4)^2(m^4 - 6m^2n^2 + n^4)^2(m^{12} - {10}m^{10}n^2 \\
 & \quad \quad + 15m^8n^4- 204m^6n^6 + 15m^4n^8 - {10}m^2n^{10} + n^{12})^2, \\
b & = 64m^4n^4(m^2 - n^2)^4(m^{12} - {10}m^{10}n^2 + 15m^8n^4\\
& \quad \quad  - 204m^6n^6 + 15m^4n^8 - {10}m^2n^{10} + n^{12})^2,\\
c & =  4m^2n^2(m^2 + n^2)^2(m^4 - 6m^2n^2 + n^4)^2(m^{12} - 26m^{10}n^2 \\
& \quad \quad + 79m^8n^4- 44m^6n^6 + 79m^4n^8 - 26m^2n^{10} + n^{12})^2
\label{allsquaresol4}
\end{aligned}
\end{equation}
where, as in the case of the two triads \eqref{allsquaresol1} and \eqref{allsquaresol2}, $m$ and $n$ are arbitrary parameters.

\subsection{Parametric solutions in which all the three integers are sums of two nonzero squares}\label{nosquares}
We will now obtain  three integers, each expressible as a   sum of two nonzero squares, such that their elementary symmetric functions are all squares. 

To obtain the three desired integers, we must choose the parameters $f, g$ and $h$  such that all the three roots of the cubic Eq.        \eqref{cubeqn} are expressible as sums of two nonzero squares. With $f$ and $g$ defined by \eqref{valfg}, one root of Eq.       \eqref{cubeqn} is $p^2+q^2$, as desired, and after removing the factor $x-p^2-q^2$, we had reduced Eq.        \eqref{cubeqn} to the quadratic Eq.        \eqref{quadxu}. Thus, we need to choose the parameters $s, t$ and $u$ such that both roots of Eq.        \eqref{quadxu} are expressible as sums of two nonzero squares.

Since the product of the two roots of Eq.        \eqref{quadxu} is $u^2(s^2+t^2)$, it follows from Lemma \ref{prelemma}, that if one of the two roots is a sum of two nonzero squares and this root is not expressible as $k^2(s^2+t^2)$ where $k$ is a nonzero rational number, then the second root of Eq.        \eqref{quadxu} will also be a sum of two nonzero squares. 

Now Eq.        \eqref{quadxu} may be considered as a quadratic equation in $u$ and it will be satisfied by a rational value of $u$ if its discriminant $4t^2\psi(s, t, x)$ with respect to $u$ is a square, where 
\begin{equation}
\psi(s, t, x) = x(s^2 - x)t^4 + 2s^4t^2x + s^2(s^4 + s^2x + x^2)x.  \label{defpsi}
\end{equation}

As we have already noted, the rational value of $u$ given by \eqref{Fermatsol1} yields two roots of Eq. \eqref{quadxu} given by \eqref{rootpairquadxu}, the first root being $ x=s^2(s^2+t^2)/t^2$. It follows that, with this value of $x$, the discriminant of Eq.       \eqref{quadxu} with respect to $u$, and hence also $\psi(s, t, x)$, must be a  square, and further, if we take 
\begin{equation}
x=s^2(r^2+s^2)/r^2, \label{valxspl}
\end{equation} 
where $r$ is an arbitrary rational parameter, then  $\psi(s, t, x)$ must become a perfect square when $t=r$.  This can also be verified by direct computation.

Thus when we take $x$ as defined by \eqref{valxspl}, then  $\psi(s, t, x)$ is  a quartic function of $t$ which becomes a square when $t=r$. Using this known value of $t$, we apply the aforementioned method of Fermat to find the following value of $t$ which also makes  $\psi(s, t, x)$  a  square:
\begin{equation}
t = r(r^6 - 9r^4s^2 - 9r^2s^4 - 3s^6)/(5r^6 + 3r^4s^2 + 3r^2s^4 + s^6).
\label{valt}
\end{equation}
Since $\psi(s, t, x)$ is now a  square, Eq.        \eqref{quadxu} has two rational solutions for $u$, one of which is given below:
\begin{equation}
u =  2s^3(5r^6 + 3r^4s^2 + 3r^2s^4 + s^6)/(r^8 + 2r^6s^2 - 12r^4s^4 - 6r^2s^6 - s^8). \label{valu}
\end{equation}
We omit writing the second rational value of $u$ that satisfies   Eq.        \eqref{quadxu} as it is cumbersome to write. We note that  the values of $x, t$ and $u$ given by \eqref{valxspl}, \eqref{valt} and \eqref{valu}, respectively, give a rational solution of Eq.        \eqref{quadxu}.

In Eq.        \eqref{quadxu}, we now take $t$ and $u$ as defined by \eqref{valt} and \eqref{valu}, respectively, and consider it as a quadratic equation in $x$. Since \eqref{valxspl} gives one rational solution of this equation, the second solution is also rational, and is given by
\begin{multline}
x = 4r^2s^4(r^{12} + 6r^{10}s^2 + 87r^8s^4 + 108r^6s^6 + 55r^4s^8 + 14r^2s^{10} + s^{12})\\
\times (r^8 + 2r^6s^2 - 12r^4s^4 - 6r^2s^6 - s^8)^{-2}. \label{valxspl2}
\end{multline}

We have thus obtained  two rational values of $x$ that are roots of Eq.        \eqref{quadxu} and  both of them are expressible as sums of two nonzero  squares. Thus, Eq.        \eqref{cubeqn} has three rational roots, and hence, as before, on appropriate scaling, we get the following three integers $a, b, c$ whose elementary symmetric functions are all squares:
\begin{equation}
\begin{aligned}
a& = r^2(r^2 + s^2)(r^8 + 2r^6s^2 - {12}r^4s^4 - 6r^2s^6 - s^8)^2\\
  & \quad \quad \times (r^{12} + 6r^{10}s^2 + 87r^8s^4 + {10}8r^6s^6 + 55r^4s^8 + 14r^2s^{10} + s^{12}),\\
b& = s^2(r^2 + s^2)(5r^6 + 3r^4s^2 + 3r^2s^4 + s^6)^2\\
  & \quad \quad \times (r^8 + 2r^6s^2 - {12}r^4s^4 - 6r^2s^6 - s^8)^2,\\
c  & =  4r^4s^4(5r^6 + 3r^4s^2 + 3r^2s^4 + s^6)^2\\
   & \quad \quad \times (r^{12} + 6r^{10}s^2 + 87r^8s^4 + {10}8r^6s^6 + 55r^4s^8 + 14r^2s^{10} + s^{12}),
\end{aligned}
\label{gensol1}
\end{equation}	
where $r$ and $s$ are arbitrary integer parameters. 

We now indicate how more such parametric solutions can be obtained. When we had mentioned the value of $u$ given by \eqref{valu}  as a rational solution of Eq.        \eqref{quadxu}, we had omitted writing the  second rational value of $u$ that satisfies   Eq.  \eqref{quadxu}.  We can use this second solution, and proceeding exactly as before, we get a second parametric solution, in terms of polynomials of degree 52,  for the three integers $a, b, c$ whose elementary symmetric functions are all squares. As this solution is  cumbersome to write, we do not give it explicitly. 

Finally, we note that more such solutions can be found by making $\psi(s, t, x)$ a  square where $x$ is defined by \eqref{valxspl}. As already noted, $\psi(s, t, x)$  is a  quartic function of $t$ which becomes a square when $t=r$ and also when $t$ is given by \eqref{valt}. Just as we proved  in  Section \ref{onesquare} that there exist infinitely many values of $u$ that make the quartic function $\phi(s, t, u)$ a square, we can now prove that  there exist infinitely many values of $t$ that make the function $\psi(s, t, x)$ a square, and these values of $t$ yield infinitely many parametric solutions of our problem in which the three integers $a, b, c$ are sums of two nonzero squares.

\subsection{Numerical examples}\label{numex}
By assigning small numerical values to the parameters in the parametric solutions obtained in Section \ref{threenumbers}, we obtained 21  numerical examples of triads of integers $ < 10^6$ such that their elementary symmetric functions are all squares. In fact, seven of these examples consist of integers $< 10000$. Table I lists these numerical examples --- in the last column we have indicated the specific parametric solution and the  values of the parameters that have yielded the numerical example. In a few cases, the triad given in Table I has been obtained after factoring out any squared factor common to the three integers obtained from the parametric  solution.

\begin{center}
\begin{tabular}{|c|c|c|c|} \hline
\multicolumn{4}{|c|}{\bf  Table I : Triads of integers $a, b, c$} \\
\multicolumn{4}{|c|}{\bf whose elementary symmetric functions are squares
} \\
\hline
$a$  &  $b$ & $c$ & Remarks \\
\hline
 180    & 45         & 64     & Solution \eqref{parmsol1} with $(s , t) = (1, 2) $ \\ 
1440    & 160        & 81     & Solution \eqref{parmsol1} with $(s  , t) = (1, 3) $ \\ 
61200    & 3825      & 1024     &      Solution \eqref{parmsol1} with $(s, t) = (1, 4) $  \\
2925    &      1300    &      5184     &      Solution \eqref{parmsol1} with $(s , t) = (2, 3) $   \\ 
93600    &      3744    &      625     &      Solution \eqref{parmsol1} with $(s, t) = (1, 5) $   \\
319725    &      51156    &      40000     &      Solution \eqref{parmsol1} with $(s , t) = (2, 5) $          \\
54400    &      19584    &      50625     &      Solution \eqref{parmsol1} with $(s  , t) = (3, 5) $          \\
83025    &      53136    &      640000     &      Solution \eqref{parmsol1} with $(s , t) = (4, 5) $          \\
80  &           320  &           225        &           Solution \eqref{parmsol2} with $(s, t) = (1, 2) $              \\          
90  &           810  &           1600  &           Solution \eqref{parmsol2} with $(s, t) = (1, 3) $  \\          
1088  &           17408  &           65025  &           Solution \eqref{parmsol2} with $(s, t) = (1, 4) $              \\          
7488  &           16848  &           4225  &           Solution \eqref{parmsol2} with $(s, t) = (2, 3) $              \\          
650  &           16250  &           97344  &           Solution \eqref{parmsol2} with $(s, t) = (1, 5) $              \\          
46400  &           290000  &           370881  &           Solution \eqref{parmsol2} with $(s, t) = (2, 5) $              \\ 
98  &           4802  &           57600  &           Solution \eqref{parmsol2} with $(s, t) = (1, 7) $              \\          
68850  &           191250  &           73984  &           Solution \eqref{parmsol2} with $(s, t) = (3, 5) $              \\ 
28880 &            81225 &            537920 &            Solution \eqref{parmsol3} with $(s , t) = (1, 2) $              \\ 
302580 &            107584 &            16245 &            Solution \eqref{parmsol4} with $(s, t) = (1, 2) $              \\ 
11025 &            19600 &            82944 &            Solution \eqref{allsquaresol1} with $(m, n) = (1, 2) $           \\ 
9216 &            5184 &            1225&            Solution \eqref{allsquaresol2} with $(m, n) = (1, 2) $           \\
136 &            72 &            153 &              Solution \eqref{gensol1} with $(r , s) = (1, 1) $           \\          
\hline
\end{tabular}
\end{center}

\section{Concluding remarks}
In this paper we reconsidered Euler's  diophantine problem of finding three integers whose elementary symmetric functions are all squares. Euler had found this to be a very difficult problem and had obtained solutions in very large integers. We first showed that each of the three integers must be a sum of two rational squares, and using this fact, we were able to find several parametric solutions that are much simpler than the solutions obtained by Euler's methods. These parametric solutions yielded a number of numerical examples of our problem in  small integers.

\noindent Ajai Choudhry, 13/4 A Clay Square, Lucknow - 226001, India.

\noindent E-mail address: ajaic203@yahoo.com
\end{document}